\newtheorem{theorem}{Theorem}[section]
\theoremstyle{definition}
\newtheorem{definition}[theorem]{Definition}
\newtheorem{example}[theorem]{Example}
\theoremstyle{remark}
\newtheorem{remark}[theorem]{Remark}
\newenvironment{case}[1]{\smallskip\noindent {\it Case~#1.}}{}
\begin{document}

\title{Graded Betti numbers of cycle graphs and standard Young tableaux}

\author[S. Klee]{Steven Klee}
\address{Seattle University, Department of Mathematics, 901 12th Avenue, Seattle, WA 98122}
\email{klees@seattleu.edu}

\author[M.T. Stamps]{Matthew T. Stamps}
\address{KTH Royal Institute of Technology, Department of Mathematics, SE-100 44, Stockholm, Sweden}
\email{stamps@math.kth.se}

\begin{abstract}
We give a bijective proof that the Betti numbers of a minimal free resolution of the Stanley-Reisner ring of a cycle graph (viewed as a one-dimensional simplicial complex) are given by the number of standard Young tableaux of a given shape.
\end{abstract}

\date{\today}  

\maketitle


\section{Introduction}

In a recent paper, \citet{dochtermann} studied the (graded) Betti numbers $\beta_{i,j}(C_n)$ of a minimal free resolution of the Stanley-Reisner ring of the cycle graph $C_n$, viewed as a one-dimensional simplicial complex.  He showed in \cite[Theorem 4.3]{dochtermann} that the nonzero Betti numbers of the resolution are $\beta_{0,0}(C_n) = \beta_{n-2,n}(C_n) = 1$ and 
\begin{equation}\label{Betti} \beta_{j-1,j}(C_n) = \#\{\text{standard Young tableaux of shape } (j,2,1^{n-j-2})\} \end{equation} for $2 \leq j \leq n-2$. Specifically, he showed that the left- and right-hand sides of Equation (\ref{Betti}) satisfy a common recursion formula.  In this note, we offer a bijective proof of this fact that preserves a natural duality present in each of the respective objects of interest. 

\section{Preliminaries}

For the sake of brevity, we will adhere to the standard definitions and notation established in \citet{MillerSturmfels} and \citet{Stanley-cca,ec1}, and we refer to these books for any undefined terms presented throughout this paper.  We use the convention that Young tableaux are arranged in left-justified rows of weakly decreasing length in which the first (top) row is the longest and.  For a standard Young tableau $T$, we denote by $T(i,j)$ the entry in the $i^{\text{th}}$ row (from the top) and $j^{\text{th}}$ column (from the left) in $T$.  

For a simplicial complex $\Delta$ on vertex set $V$ and $W \subseteq V$, we use $\Delta[W]:=\{F \in \Delta\ : \ F \subseteq W\}$ to denote the \textit{restriction} of $\Delta$ to the vertices in $W$, $\overline{W}$ to denote the set complement of $W$ in $V$, and $C_n$ to denote the standard cycle graph on $n$ vertices, i.e., the graph on vertex set $[n]:=\{1,2,\ldots,n\}$ whose edge set consists of all pairs $\{i,j\}$ such that $i - j \equiv \pm 1 \mod n$.  

We recall Hochster's formula, which will be the main tool in our analysis. 

\begin{theorem}[Hochster's formula]\label{thm:Hochster}  Let $\Delta$ be a simplicial complex on vertex set $V$, let $\mathbf{k}$ be a field, and let $\mathbf{k}[\Delta]$ be the Stanley-Reisner ring of $\Delta$.  Then the graded Betti numbers of a minimal free resolution of $\mathbf{k}[\Delta]$ are given by 
\begin{equation}\label{Hochster}
\beta_{i,j}(\Delta) = \sum_{W \in {V \choose j}} \dim_{\mathbf{k}} \widetilde{H}_{j-i-1}(\Delta[W];\mathbf{k}).
\end{equation}
\end{theorem}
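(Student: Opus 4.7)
The plan is to compute $\beta_{i,j}(\Delta) = \dim_{\mathbf{k}} \mathrm{Tor}_i^S(\mathbf{k}[\Delta], \mathbf{k})_j$, with $S = \mathbf{k}[x_v : v \in V]$, by resolving $\mathbf{k}$ over $S$ with the Koszul complex $K_\bullet$. Since both $S$ and the Stanley-Reisner ideal $I_\Delta$ are $\mathbb{Z}^V$-homogeneous, the complex $K_\bullet \otimes_S \mathbf{k}[\Delta]$ inherits a fine multigrading, and I would compute its homology one multidegree $\mathbf{a} \in \mathbb{Z}^V$ at a time, eventually summing over all $\mathbf{a}$ whose total degree is $j$.

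The first step is to reduce to squarefree multidegrees. If some coordinate $a_v \geq 2$, I would produce an explicit contracting homotopy built from contraction with the Koszul generator $e_v$ together with the ``divide by $x_v$'' operator on monomials; this kills the homology of the $\mathbf{a}$-strand. Consequently, only multidegrees $\mathbf{a} = \mathbf{e}_W$ indexed by subsets $W \subseteq V$ can contribute.

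The second step is to identify the strand in squarefree degree $W$ with an appropriately shifted reduced chain complex of $\Delta[W]$. Unwinding definitions, the degree-$\mathbf{e}_W$ part of $K_p \otimes_S \mathbf{k}[\Delta]$ has a basis indexed by pairs $(\sigma, \tau)$ with $\sigma \sqcup \tau = W$, $|\sigma| = p$, and $\tau \in \Delta$ (otherwise the monomial $x^\tau$ vanishes in $\mathbf{k}[\Delta]$). Setting $\tau = W \setminus \sigma$ identifies this basis with the faces of $\Delta[W]$ of dimension $|W|-p-1$, and a sign check shows the induced Koszul differential coincides (up to a global sign twist) with the simplicial boundary on $\Delta[W]$. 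Taking homology gives $\mathrm{Tor}_i^S(\mathbf{k}[\Delta], \mathbf{k})_W \cong \widetilde{H}_{|W|-i-1}(\Delta[W]; \mathbf{k})$, and summing over all $W \in \binom{V}{j}$ yields Equation (\ref{Hochster}).

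The main obstacle is the sign and index bookkeeping in the identification of step two: one needs the homological shift to land exactly on $\widetilde{H}_{j-i-1}$ rather than on some neighboring index, and the identification of the Koszul differential with the simplicial boundary requires carefully tracking the permutation signs that arise from reordering $\sigma$ and $\tau$ within $W$. The vanishing in non-squarefree degrees is standard but essential, since without it the sum over multidegrees would have no reason to terminate or to admit the stated combinatorial interpretation.
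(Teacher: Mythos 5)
The paper offers no proof of this statement: Hochster's formula is quoted as a known classical result, with the reader referred to the books of Miller--Sturmfels and Stanley, so there is no in-paper argument to compare yours against. Judged on its own, your outline is essentially the standard textbook proof: resolve $\mathbf{k}$ over $S$ by the Koszul complex, exploit the fine $\mathbb{Z}^V$-grading, kill the non-squarefree strands by the cone/contracting-homotopy argument (your homotopy built from contraction with $e_v$ and division by $x_v$ is exactly the cone homotopy), and identify the strand in squarefree degree $\mathbf{e}_W$ with a (co)chain complex of $\Delta[W]$. The one place your bookkeeping goes wrong is the direction of the differential in the second step: under the identification $e_\sigma \otimes x^{W\setminus\sigma} \leftrightarrow \tau = W\setminus\sigma$, the Koszul differential sends $\tau$ to a signed sum of the faces $\tau\cup\{v\}$ with $v\in\sigma$ and $\tau\cup\{v\}\in\Delta$, so it is the simplicial \emph{coboundary} on $\Delta[W]$, not the boundary, and no global sign twist can convert one into the other. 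The strand therefore computes $\widetilde{H}^{\,|W|-i-1}(\Delta[W];\mathbf{k})$, which is in fact how Hochster's formula is usually stated. This does not endanger the formula as written in the paper, since $\mathbf{k}$ is a field and $\dim_{\mathbf{k}}\widetilde{H}^{d}(\Delta[W];\mathbf{k}) = \dim_{\mathbf{k}}\widetilde{H}_{d}(\Delta[W];\mathbf{k})$ (alternatively, one can use the self-duality of the Koszul complex to land on homology directly), but a careful write-up should either prove the cohomological version and then pass to homology over the field, or dualize explicitly rather than asserting the differential is the boundary map. With that correction, your plan goes through.
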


When $\Delta = C_n$, it is clear from Equation (\ref{Hochster}) that $\beta_{0,0}(C_n) = 1$ and that $\beta_{n-2,n}(C_n) = 1$ by taking $W = \emptyset$ and $W = [n]$, respectively.  Furthermore, the restriction of $C_n$ to any proper, nonempty subset of vertices can only have non-vanishing homology in dimension $0$, so the only remaining nonzero Betti numbers in the resolution of $C_n$ are those $\beta_{j-1,j}(C_n)$ with $2 \leq j \leq n-2$.  (If $j = 1$ or $j \geq n-1$, the restriction of $C_n$ to any subset of $j$ vertices is connected, and hence does not contribute to the sum in Equation (\ref{Hochster}).)  

\section{The bijection}

Our primary goal is to understand the combinatorics of $\beta_{j-1,j}(C_n)$ for $2 \leq j \leq n-2$.  By Theorem \ref{thm:Hochster}, we know every subset $W \in {[n] \choose j}$ contributes one less  than the number of connected components of $\Delta[W]$ to $\beta_{j-1,j}(C_n)$, so our initial aim will be to associate to every standard Young tableau of shape $(j,2,1^{n-j-2})$ a unique pair $(W,X)$, where $W \in {[n] \choose j}$ and $X$ represents a distinguished connected component of $C_n[W]$.  

\begin{definition} 
For $n \geq 4$ and $2 \leq j \leq n-2$, let $\mathcal{Y}(j,n)$ denote the set of standard Young tableaux of shape $(j,2,1^{n-j-2})$.  
\end{definition}

Since every standard Young tableau filled with the numbers in $[n]$ has a box labeled $1$ in its upper left corner, the number $1$ must be distinguished in terms of the restrictions $C_n[W]$ in any bijection under consideration.  At the same time, for any proper, nonempty $W \subset [n]$, the restrictions $C_n[W]$ and $C_n[\overline{W}]$ have the same number of connected components, so any proposed bijection must somehow condition on the presence/absence of $1$ in a set $W$ and the connected components of the restrictions $\Delta[W]$ or $\Delta[\overline{W}]$, based on which of these sets contains vertex $1$.

\begin{definition}
For every subset $W \subset V$, let $$m(W):= \begin{cases} \{\min(X)\ : \ X \text{ is a connected component of } C_n[W]\} & \text{if } 1 \notin W, \\ \{\min(X)\ : \ X \text{ is a connected component of } C_n[\overline{W}]\} & \text{if } 1 \in W, \end{cases}$$ and $m'(W) := m(W) \setminus \min(m(W))$. 
\end{definition}

 Since $C_n[W]$ and $C_n[\overline{W}]$ have the same number of connected components, it follows that $|m(W)|$ is equal to (and $|m'(W)|$ is one less than) the number of connected components of $\Delta[W]$.  Note that the knowledge of $1 \in W$ and $a \in m(W)$ is sufficient to determine a component of $C_n[W]$ (or $C_n[\overline{W}]$ if $1 \notin W$). 
 
 \begin{definition} 
For $n \geq 4$ and $2 \leq j \leq n-2$, let $$\mathcal{S}(j,n) = \left\{(W,a)\ : \ W \in {[n] \choose j} \text{ and } a \in m'(W)\right\}.$$ 
 \end{definition}
 
 \begin{remark}
 Observe that $m'(W)$ is implicitly required to be nonempty and hence $C_n[W]$ has at least two connected components for each $W$ under consideration here.
 \end{remark}

At this point we are ready to present our bijection between $\mathcal{Y}(j,n)$ and $\mathcal{S}(j,n)$, but before we continue, let us first turn our attention to the set $\mathcal{Y}(j,n)$ for some brief motivation:  If $T$ is an element of $\mathcal{Y}(j,n)$, then the first row of $T$ has $j$ boxes filled by unique elements of $[n]$, the first column of $T$ has $n-j$ boxes filled by unique elements of $[n]$, and, since $T$ is standard, we know the element $1$ must be located at position $T(1,1)$.  Thus, for a given $W \in {[n] \choose j}$, it is natural to associate a standard Young tableau to $W$ by first filling the first row of the table with the elements of $W$ if $1 \in W$ and otherwise filling the first column of the table by the elements of $\overline{W}$ if $1 \notin W$.  The set $W$ is not sufficient to determine a single standard Young tableau under this rule, however, because $C_n[W]$ and $C_n[\overline{W}]$ may have many connected components.  To account for the different components, we make use of the box at position $(2,2)$ of our Young diagram.  

\begin{definition}
For every $T \in \mathcal{Y}(j,n)$, let $a_T:= T(2,2)$, let $B$ be the box in $T$ that contains the number $a_T-1$, and set 
\begin{equation*}
W_T:=
\begin{cases}
\{T(1,i)\ : \ 1 \leq i \leq j\} & \text{if $B$ lies in the first row of $T$,}\\
\{T(2,2)\} \cup \{T(1,i)\ : \ 2 \leq i \leq j\} & \text{if $B$ lies in the first column of $T$}.
\end{cases}
\end{equation*}
\end{definition}

We now proceed with the main result of this paper:

\begin{theorem}\label{thm:main}
For every $n \geq 4$ and $2 \leq j \leq n-2$, the function $\phi: \mathcal{Y}(j,n) \rightarrow \mathcal{S}(j,n)$ given by $\phi(T) = (W_T,a_T)$ is a bijection.
\end{theorem}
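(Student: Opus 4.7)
The plan is to construct an explicit inverse $\psi \colon \mathcal{S}(j,n) \to \mathcal{Y}(j,n)$ and check that $\phi$ and $\psi$ are mutual inverses. The combinatorial fact that will drive every verification is that the components of $C_n[S]$ for $S \subseteq [n]$ are arcs of the cycle; in particular, when $1 \notin S$ and $a \in S$, the vertex $a$ is the minimum of its component if and only if $a - 1 \notin S$, and the analogous statement holds for $C_n[\overline{W}]$ when $1 \in W$.

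To verify $\phi$ is well-defined, suppose first that the box $B$ lies in row 1 of $T$. Then $W_T$ equals row 1 (so $1 \in W_T$), and $a_T - 1 \in W_T$. The arc of $C_n[\overline{W_T}]$ containing $a_T = T(2,2)$ therefore cannot extend below $a_T$, which gives $a_T \in m(W_T)$; since $T(2,1) \in \overline{W_T}$ satisfies $T(2,1) < a_T$ and lies in a distinct arc, we also have $a_T \neq \min m(W_T)$, hence $a_T \in m'(W_T)$. When $B$ lies in column 1, the argument is symmetric, with $C_n[W_T]$ in place of $C_n[\overline{W_T}]$ and $T(1,2)$ in place of $T(2,1)$.

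For the inverse, given $(W, a) \in \mathcal{S}(j,n)$, I would define $T = \psi(W, a)$ by placing $1$ at position $(1,1)$ and $a$ at position $(2,2)$, and then: if $1 \in W$, filling row 1 with the elements of $W$ in increasing order and the remainder of column 1 with $\overline{W} \setminus \{a\}$ in increasing order; if $1 \notin W$, filling row 1 with $\{1\} \cup (W \setminus \{a\})$ in increasing order and the remainder of column 1 with $\overline{W} \setminus \{1\}$ in increasing order. Standardness of $T$ reduces to the inequalities $T(1,2) < a$ and $T(2,1) < a$. Using the combinatorial principle above, the hypothesis $a \in m'(W)$ forces $a - 1$ to lie in the set containing $1$ (otherwise $a$ would not be the minimum of its component), which supplies one of these inequalities as $T(1,2) \leq a - 1$ or $T(2,1) \leq a - 1$; the other follows from $a \neq \min m(W)$.

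The main technical hurdle will be correctly matching the dichotomy on the location of $B$ in the definition of $\phi$ with the dichotomy on whether $1 \in W$ in the definition of $\psi$. Once both maps are shown to be well-defined, verifying $\phi \circ \psi = \mathrm{id}$ and $\psi \circ \phi = \mathrm{id}$ reduces to straightforward bookkeeping, since the positions of $1$ and $a$ determine the row-1/column-1 decomposition and monotonicity fills in the remaining entries uniquely.
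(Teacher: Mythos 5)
Your proposal is correct and takes essentially the same route as the paper: your inverse map $\psi$ is exactly the tableau constructed in the paper's surjectivity argument, and your reduction of standardness to the two inequalities $T(1,2) < a_T$ and $T(2,1) < a_T$ (proved via the arc structure of the components and the condition $a \neq \min m(W)$) matches the paper's reasoning. The only substantive difference is that you explicitly check that $\phi$ lands in $\mathcal{S}(j,n)$, a point the paper leaves implicit.
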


\begin{example}
We exhibit $\phi: \mathcal{Y}(2,5) \rightarrow \mathcal{S}(2,5)$.  The subsets $W \subseteq [5]$ for which $C_5[W]$ has multiple connected components correspond to the chords of $C_5$, so $\mathcal{S}(2,5)$ consists of the following ordered pairs: $$(\{2,4\},4), \quad (\{2,5\},5), \quad (\{3,5\},5), \quad (\{1,3\},4), \quad (\{1,4\},5).$$  Moreover, $\mathcal{Y}(2,5)$ consists of the following five fillings of the shape $(2,2,1)$, which are shown below with their corresponding images under $\phi$.

\begin{center}
\scalebox{0.9}{
\begin{tabular}{p{.2\textwidth}p{.2\textwidth}p{.2\textwidth}p{.2\textwidth}p{.2\textwidth}}
\begin{tikzpicture}[scale=.5]
\draw (0,0) -- (0,3) -- (2,3) -- (2,1) -- (1,1) -- (1,0) -- (0,0);
\draw (1,1) -- (1,3);
\draw (0,2) -- (2,2);
\draw (0,1) -- (1,1);
\draw (-1,1.5) node {$T_1 = $};
\draw (.5,2.5) node {$1$};
\draw (1.5,2.5) node {$2$};
\draw (.5,1.5) node {$3$};
\draw (1.5,1.5) node {$4$};
\draw (.5,.5) node {$5$};
\end{tikzpicture}
&
\begin{tikzpicture}[scale=.5]
\draw (0,0) -- (0,3) -- (2,3) -- (2,1) -- (1,1) -- (1,0) -- (0,0);
\draw (1,1) -- (1,3);
\draw (0,2) -- (2,2);
\draw (0,1) -- (1,1);
\draw (-1,1.5) node {$T_2 = $};
\draw (.5,2.5) node {$1$};
\draw (1.5,2.5) node {$3$};
\draw (.5,1.5) node {$2$};
\draw (1.5,1.5) node {$4$};
\draw (.5,.5) node {$5$};
\end{tikzpicture}
&
\begin{tikzpicture}[scale=.5]
\draw (0,0) -- (0,3) -- (2,3) -- (2,1) -- (1,1) -- (1,0) -- (0,0);
\draw (1,1) -- (1,3);
\draw (0,2) -- (2,2);
\draw (0,1) -- (1,1);
\draw (-1,1.5) node {$T_3 = $};
\draw (.5,2.5) node {$1$};
\draw (1.5,2.5) node {$2$};
\draw (.5,1.5) node {$3$};
\draw (1.5,1.5) node {$5$};
\draw (.5,.5) node {$4$};
\end{tikzpicture}
&
\begin{tikzpicture}[scale=.5]
\draw (0,0) -- (0,3) -- (2,3) -- (2,1) -- (1,1) -- (1,0) -- (0,0);
\draw (1,1) -- (1,3);
\draw (0,2) -- (2,2);
\draw (0,1) -- (1,1);
\draw (-1,1.5) node {$T_4 = $};
\draw (.5,2.5) node {$1$};
\draw (1.5,2.5) node {$3$};
\draw (.5,1.5) node {$2$};
\draw (1.5,1.5) node {$5$};
\draw (.5,.5) node {$4$};
\end{tikzpicture}
&
\begin{tikzpicture}[scale=.5]
\draw (0,0) -- (0,3) -- (2,3) -- (2,1) -- (1,1) -- (1,0) -- (0,0);
\draw (1,1) -- (1,3);
\draw (0,2) -- (2,2);
\draw (0,1) -- (1,1);
\draw (-1,1.5) node {$T_5 = $};
\draw (.5,2.5) node {$1$};
\draw (1.5,2.5) node {$4$};
\draw (.5,1.5) node {$2$};
\draw (1.5,1.5) node {$5$};
\draw (.5,.5) node {$3$};
\end{tikzpicture}
\\
$\phi(T_1) = (\{2,4\},4)$
&
$\phi(T_2) = (\{1,3\},4)$
& 
$\phi(T_3) = (\{2,5\},5)$
&
$\phi(T_4) = (\{3,5\},5)$
&
$\phi(T_5) = (\{1,4\},5)$.
\end{tabular}
}
\end{center}

\end{example}

\begin{example}
The case that $n=6$ and $j=3$ is the first case in which we can have a restricted subcomplex with more than two connected components. If $W = \{2,4,6\}$, then $m'(W) = \{4,6\}$ and the tableaux corresponding to $(\{2,4,6\},4)$ and $(\{2,4,6\},6)$, respectively, are \medskip

\begin{center}
\scalebox{0.9}{
\begin{tabular}{m{.15\textwidth}m{.1\textwidth}m{.15\textwidth}}
\begin{tikzpicture}[scale=.5]
\draw (0,0) -- (0,3) -- (3,3) -- (3,2) -- (2,2) -- (2,1) -- (1,1) -- (1,0) -- (0,0);
\draw (0,2) -- (2,2) -- (2,3);
\draw (0,1) -- (1,1) -- (1,3);
\draw (.5,2.5) node {$1$};
\draw (1.5,2.5) node {$2$};
\draw (2.5,2.5) node {$6$};
\draw (.5,1.5) node {$3$};
\draw (1.5,1.5) node {$4$};
\draw (.5,.5) node {$5$};
\end{tikzpicture}
&
and
&
\begin{tikzpicture}[scale=.5]
\draw (0,0) -- (0,3) -- (3,3) -- (3,2) -- (2,2) -- (2,1) -- (1,1) -- (1,0) -- (0,0);
\draw (0,2) -- (2,2) -- (2,3);
\draw (0,1) -- (1,1) -- (1,3);
\draw (.5,2.5) node {$1$};
\draw (1.5,2.5) node {$2$};
\draw (2.5,2.5) node {$4$};
\draw (.5,1.5) node {$3$};
\draw (1.5,1.5) node {$6$};
\draw (.5,.5) node {$5$};
\end{tikzpicture}
.
\end{tabular}
}
\end{center}
\end{example}

\begin{proof}[Proof of Theorem \ref{thm:main}]

We begin showing that $\phi$ is injective:  Suppose that $T$ and $T'$ are tableaux for which $\phi(T) = \phi(T')$.  Let $B$ be the box in $T$ containing the number $a_T-1$ and $B'$ be the box in $T'$ containing the number $a_T'-1$.  We consider two cases based on whether or not $1 \in W_T = W_{T'}$.  

\begin{case}{1.1} Suppose $1 \in W_T = W_{T'}$.  Then the entries of the first rows of $T$ and $T'$ are the elements of $W_T = W_{T'}$.  Since $T$ and $T'$ are standard, these entries must be written in increasing order, so the first rows of $T$ and $T'$ must be the equal.  Since $a_T = a_{T'}$, we also get that $T(2,2) = T'(2,2)$. Again, since $T$ and $T'$ are standard, it follows that the remaining entries, which must all lie in the respective first columns of $T$ and $T'$, are equal.  Therefore, $T = T'$. \end{case}  

\begin{case}{1.2} Suppose $1 \notin W_T = W_{T'}$. Then the entries of the first columns of $T$ and $T'$ are the elements of the complement of $W_T = W_{T'}$ in $[n]$.  Since $T$ and $T'$ are standard, these entries must be written in increasing order, so the first columns of $T$ and $T'$ must be equal.  Since $a_T = a_{T'}$, we also get that $T(2,2) = T'(2,2)$.  Again, since $T$ and $T'$ are standard, it follows that the remaining entries, which must all lie in the respective first rows of $T$ and $T'$, are equal.  Therefore, $T = T'$.  \end{case} \smallskip

Next, we show that $\phi$ is surjective:  Let $(W,a)$ be an element in $\mathcal{S}(j,n)$.  We consider two cases based on whether or not $1 \in W$.  Recall by our construction that $1 \in W$ if and only if $a \notin W$.

\begin{case}{2.1} Suppose $1 \in W$ and consider the tableau $T$ of shape $(j,2,1^{n-j-2})$ filled in the following way: 
\begin{itemize}
\item Sort $W$ and fill it into the first row of $T$;
\item Enter $a$ in the $(2,2)$ position of $T$;
\item Sort $\overline{W}-\{a\}$ and fill it into the rest of the first column of $T$.
\end{itemize}
It is clear that this filling is well-defined and that each element of $[n]$ belongs to one of the boxes of $T$.  Let $b = T(1,2)$ and $c = T(2,1)$.  To show that $T$ is a standard filling, it suffices to prove that $a>b$ and $a>c$.  Observe that $b$ is the second-smallest element of $W$.  If $b=2$, then it is clear that $a > b$.  Otherwise $2 \notin W$, which means $\{2, \ldots, b-1\}$ is a connected component of $C_n[\overline{W}]$, which implies that $\min(m(W)) = 2$.  Thus, every element of $m'(W)$, in particular $a$, is greater than $b$, since the remaining connected components of $C_n[\overline{W}]$ are subsets of $\{b+1 \ldots, n\}$.  This proves that $a>b$.  To see that $a>c$, we recall that $a \notin W$ and, by construction, that $a$ cannot be the smallest element of $\overline{W}$.  It follows that $c$ must be the smallest element of $\overline{W}$, and hence $a>c$.  This establishes that $T$ is standard.  \end{case}

\begin{case}{2.2} Suppose $1 \notin W$ and consider the tableau $T$ of shape $(j,2,1^{n-j-2})$ filled in the following way: 
\begin{itemize}
\item Sort $\overline{W}$ and fill it into the first column of $T$;
\item Enter $a$ in the $(2,2)$ position of $T$;
\item Sort $W - \{a\}$ and fill it into the rest of the first row of $T$.
\end{itemize}
Let $b = T(1,2)$ and $c = T(2,1)$ as before.  To show that the filling of $T$ is standard, it suffices to prove that $a>b$ and $a>c$.  Observe that $c$ is the second-smallest element of $\overline{W}$.  If $c=2$, then it is clear that $a>c$.  Otherwise $2 \in W$, which means $\{2, \ldots, c-1\}$ is a connected component of $C_n[W]$, which implies that $\min(m(W)) = 2$.  Thus, every element of $m'(W)$, in particular $a$, is greater than $c$.  To see that $a > b$, we observe that $b$ is the smallest element of $W$ and hence $\min(m(W)) = b$.  Therefore, each element of $m'(W)$, in particular $a$, is greater than $b$.  This establishes that $T$ is standard. \end{case} \smallskip

In both Cases 2.1 and 2.2, it is clear from our definition that $\phi(T) = (W,a)$.
\end{proof}

\begin{remark}
We noted earlier that for any proper, nonempty subset $W \subset [n]$, the restrictions $C_n[W]$ and $C_n[\overline{W}]$ have the same number of connected components, which implies that $\beta_{j-1,j}(C_n) = \beta_{n-j-1,n-j}(C_n)$ for any $1 \leq j \leq n-1$.  This duality is expected since $\mathbf{k}[C_n]$ is known to be Gorenstein.  The duality is reflected in the combinatorics on standard Young tableau in the form of transposition.  (The transpose $T^*$ of a standard Young tableaux $T$ of shape $(j,2,1^{n-j-2})$ is a standard Young tableaux of shape $(n-j,2,1^{j-2})$.)  The bijection defined in Theorem \ref{thm:main} establishes that these two notions of duality are compatible, that is $W_{T^*} = \overline{W}_T$ and $a_{T^*} = a_T$ for the transpose $T^*$ of any tableau $T$.
\end{remark}

\section*{Acknowledgements} 
We are grateful to Alexander Engstr\"om for suggesting this problem.  This work was supported in part by NSF Grant \#1159206.

\bibliography{biblio}
\bibliographystyle{abbrvnat}

\end{document}